\newcommand{\Q}{\mathbb{Q}}
\newcommand{\F}{\mathbb{F}}
\newcommand{\Spec}{\operatorname{Spec}}
\newcommand{\Der}{\operatorname{Der}}
\newcommand{\Aut}{\operatorname{Aut}}
\newcommand{\Exp}{{\rm Exp}}
\newcommand{\ri}{i}
\newcommand{\rii}{i\hspace{-.1em}i}
\newcommand{\riii}{i\hspace{-.1em}i\hspace{-.1em}i}
\newtheorem{thm}{Theorem}[section]
\newtheorem{prop}[thm]{Proposition}
\newtheorem{cor}[thm]{Corollary}
\newtheorem{defn}[thm]{Definition}
\newtheorem{example}[thm]{Example}
\newtheorem{remark}[thm]{Remark}
\begin{document}
\title[Higher derivations of Jacobian type in positive characteristic]{Higher derivations of Jacobian type in positive characteristic}
\author{Takanori Nagamine}
\address[T. Nagamine]{Graduate School of Science and Technology, Niigata University, 8050 Ikarashininocho, Nishi-ku, Niigata 950-2181, Japan}
\email{t.nagamine14@m.sc.niigata-u.ac.jp}
\date{\today}
\subjclass[2010]{Primary 14R10; Secondary 13N15}
\keywords{Variable, Higher derivation}
\thanks{Research of the author was partially supported by Grant-in-Aid for JSPS Fellows  (No.\ 18J10420) from Japan Society for the Promotion of Science}

\begin{abstract}
In this paper, we study higher derivations of Jacobian type in positive characteristic. We give a necessary and sufficient condition for $(n-1)$-tuples of polynomials to be extendable in $R[x_1, \ldots, x_n]$ over an integral domain $R$ of positive characteristic. In particular, we give characterizations of variables and univariate polynomials by using the terms of higher derivations of Jacobian type in the polynomial ring in two variables over a field of positive characteristic.   
\end{abstract}

\maketitle

\setcounter{section}{-1}

\section[Introduction]{Introduction}

In this paper, we study higher derivations of Jacobian type in positive characteristic. In the case where the characteristic of the ground field is zero, derivations of Jacobian type are well known and they are one of the most important tools for understanding polynomial rings. See e.g., \cite{Mak98}, \cite{Ess00} and \cite{Fre17}. However, in the case where the characteristic of the ground field is positive, there are no concepts corresponding to derivations of Jacobian type. 

In \emph{Section 1}, we recall some kinds of higher derivations and their properties. Also we recall variables,  univariate polynomials and extendable $(n-1)$-tuples of polynomials.   

In \emph{Section 2}, we recall some properties of a smooth extension of rings. In \emph{Definition \ref{def:2.4}}, we introduce concepts for higher derivations of Jacobian type. We show that smooth ring extensions guarantee the existence of higher derivations of Jacobian type (\emph{Proposition \ref{prop:2.5}}). The main result in this paper is \emph{Theorem \ref{thm:2.8}} which gives a necessary and sufficient condition for $(n-1)$-tuples of polynomials to be extendable by the terms of higher derivations of Jacobian type. This is a generalization of \cite[Proposition 2.3]{Ess95} in positive characteristic. 
   
In \emph{Section 3}, we study higher derivations of Jacobian type on $k[x,y]$. In \emph{Theorem \ref{thm:3.1}} and \emph{Corollary \ref{cor:3.2}}, we give characterizations of variables and univariate polynomials by using the terms of higher derivations of Jacobian type. \emph{Theorem \ref{thm:3.1}} is a generalization of \cite[Theorem 3.2]{ER04} in the case where the characteristic of the ground field is positive.    

\section[Preliminaries]{Preliminaries}

Let $R$ be an integral domain of characteristic $p \geq 0$. For a positive integer $n \geq 1$, we denote $R^{[n]}$ by the polynomial ring in $n$ variables over $R$ and $Q(R)$ by the field of fractions. 

Through this section, assume that $B$ is an integral domain containing $R$. 
Let $D = \{ D_{\ell}\}_{\ell = 0}^{\infty}$ be a family of $R$-linear maps $D_{\ell} : B \to B$ for $\ell \geq 0$. We say that $D$ is a {\bf higher $R$-derivation} on $B$ if, for $f, g \in B$ and $\ell \geq 0$, 
	\begin{enumerate}
	  \item[{\rm (a)}]
	  $D_0 = {\rm id}_B$, 
	  \item[{\rm (b)}]
	  $\displaystyle D_{\ell}(fg) = \sum_{i + j = \ell}D_i(f)D_j(g)$. 
	\end{enumerate}
Note that, for a higher $R$-derivation $D = \{ D_{\ell}\}_{\ell = 0}^{\infty}$, $D_1$ is an $R$-derivation on $B$. 

For a higher $R$-derivation $D = \{ D_{\ell}\}_{\ell = 0}^{\infty}$ on $B$, we define the map $\varphi_D : B \to B[[t]]$, where $B[[t]]$ is the formal power series ring in one variable over $B$, by 
	\[  
	  \varphi_D(f) = \sum_{i = 0}^{\infty} D_i(f) t^i
	\]
for $f \in B$. The above condition (b) implies that $\varphi_D$ is a homomorphism of $R$-algebras, condition (a) implies that $\varphi_D(f) |_{t = 0} = f$. We call the mapping $\varphi_D$ the {\bf homomorphism associated to $D$}. We denote $B^D$ by the intersections of the kernel of $D_\ell$ for $\ell \geq 1$, that is, 
	\[
	  B^D = \bigcap_{\ell \geq 1} \ker D_{\ell}. 
	\]
We say that $D$ is {\bf trivial} if $B^D = B$. A higher $R$-derivation $D = \{ D_{\ell}\}_{\ell = 0}^{\infty}$ on $B$ is {\bf locally finite} if $D$ satisfies{\rm :} 
	\begin{enumerate}
	  \item[{\rm (c)}]
	  for any $f \in B$, there exists a positive integer $N_f \geq 1$ such that $D_{\ell}(f) = 0$ for any $\ell \geq N_f$, 
	\end{enumerate}
and is {\bf iterative} if $D$ satisfies{\rm :}
	\begin{enumerate}
	  \item[{\rm (d)}]
	  $\displaystyle D_i \circ D_j = \binom{i + j}{j}D_{i + j}$ for any $i, j \geq 0$. 
	\end{enumerate}
When $D = \{ D_{\ell}\}_{\ell = 0}^{\infty}$ satisfies the above conditions (a), (b), (c) and (d), we say $D$ is a {\bf locally finite iterative higher $R$-derivation}, for short an {\bf lfihd}.  

Let $D$ be an lfihd on $B$. An element $s$ of $B$ is called a {\bf local slice} of $D$ if it
satisfies the following conditions:
	\begin{enumerate}
	  \item[{\rm (a)}]
	  $s\not\in B^D$, 
	  \item[{\rm (b)}] 
	  $\deg_{t}(\varphi_D(s))= {\rm min} \{ \deg_{t}(\varphi_D(b)) \ | \ b \in B\setminus B^D \}$. 
	\end{enumerate} 
Here, we note that every nontrivial lfihd on $B$ has local slices. A local slice $s\in B$ of $D$ is called a {\bf slice} if the leading coefficient of $\varphi_D(s)$ is a unit of $B$.

\begin{prop} \label{prop:1.1} 
{\rm (cf.\ \cite[Lemma 1.4]{Miy78})}
Let $D$ be an lfihd on $B$. If $D$ has a slice $s\in B$, then $B=B^D[s]$ and $s$ is indeterminate over $B^D$. 
\end{prop}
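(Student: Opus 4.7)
The plan is to establish first that $s$ is transcendental over $B^D$, and then to prove $B = B^D[s]$ by an induction on the filtration $v(b) := \deg_t \varphi_D(b)$. For transcendence, I would suppose $p(s) = 0$ for some nonzero $p(y) = a_n y^n + \cdots + a_0 \in B^D[y]$ of minimal degree $n \geq 1$ and apply $\varphi_D$ (which fixes $B^D$) to obtain $p(\varphi_D(s)) = 0$ in $B[t]$; but $\varphi_D(s) = s + D_1(s)t + \cdots + D_d(s)t^d$ has $t$-degree $d$ with unit leading coefficient $D_d(s)$, so the top $t$-term $a_n D_d(s)^n t^{nd}$ is nonzero, a contradiction.

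For the equality, I induct on $e = v(b)$. The base $e = 0$ gives $b \in B^D$. The crucial fact is that whenever $v(b) = e$, the leading coefficient $D_e(b)$ lies in $B^D$, since iterativity forces $D_i(D_e(b)) = \binom{e+i}{i} D_{e+i}(b) = 0$ for all $i \geq 1$; in particular $D_d(s) \in B^D$ is a unit, so $D_d(s)^{-1} \in B^D$ as well. Once $d \mid e$ is known, I set $k = e/d$ and $c = D_e(b) D_d(s)^{-k} \in B^D$; then $\varphi_D(cs^k) = c \varphi_D(s)^k$ has leading $t$-term $D_e(b) t^e$, matching that of $\varphi_D(b)$, so $v(b - cs^k) < e$, and the inductive hypothesis yields $b - cs^k \in B^D[s]$, hence $b \in B^D[s]$.

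The main obstacle is thus proving $d \mid v(b)$ for every $b \in B$. I would first deduce that $d$ is a power of $p$ from minimality: if $d = p^a m$ with $p \nmid m$ and $m > 1$, then Lucas's theorem gives $\binom{d}{p^a} \equiv m \not\equiv 0 \pmod{p}$, so $v(D_{p^a}(s)) = d - p^a$ would lie strictly between $0$ and $d$, contradicting minimality. Thus $d = p^a$. I then prove $d \mid v(b)$ by a second induction on $e = v(b)$: by Lucas, $\binom{e}{d}$ is congruent mod $p$ to the $p^a$-digit $e_a$ of $e$. If $e_a \neq 0$, then $v(D_d(b)) = e - d$ and the inductive hypothesis gives $d \mid (e - d)$, hence $d \mid e$. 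If $e_a = 0$ but $d \nmid e$, then $e$ has a nonzero digit $e_k$ with $k < a$ and, since $e \geq d$ with $e_a = 0$, also a nonzero digit $e_{a+c}$ with $c \geq 1$; applying $D_{p^{a+c}}$ yields an element with $v = e - p^{a+c}$, strictly less than $e$ but satisfying $e - p^{a+c} \equiv e \not\equiv 0 \pmod{d}$, contradicting the inductive hypothesis.
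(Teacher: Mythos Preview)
The paper does not supply its own proof of this proposition; it simply cites \cite[Lemma~1.4]{Miy78}. So there is nothing in the paper to compare against, and the question reduces to whether your argument is sound. It is, and the filtration-plus-leading-term induction you outline is exactly the classical route to this slice theorem. Two minor corrections are worth noting.

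First, in the step showing that $d$ is a $p$-power, Lucas's theorem actually gives $\binom{d}{p^a}\equiv m_0\pmod p$, where $m_0$ is the least significant base-$p$ digit of $m$, not $m$ itself. Since $p\nmid m$ forces $m_0\neq 0$, the conclusion $\binom{d}{p^a}\not\equiv 0\pmod p$ is unaffected.

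Second, your divisibility argument is written for characteristic $p>0$, whereas the proposition (and the ambient Section~1) allows $p\ge 0$. In characteristic zero the same minimality trick, applied with $D_1$ in place of $D_{p^a}$, gives $v(D_1(s))=d-1$ and hence $d=1$; the condition $d\mid v(b)$ is then vacuous and the rest of your induction goes through unchanged.
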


In the rest of this section, we assume that $B= R[x_1, \ldots, x_n]\cong_R R^{[n]}$ is the polynomial ring in $n$ variables over $R$. Let $f \in B\setminus R$ be a non-constant polynomial. $f$ is called a {\bf variable} (or {\bf coordinate}) over $R$ if $R[f]^{[n-1]}=B$. Finally, $f$ is called  {\bf univariate} over $R$ if there exists a variable $g \in B$ such that $f \in R[g]$. An $(n-1)$-tuple polynomials $f_1, \ldots, f_{n-1} \in B$ is said to be {\bf extendable} if $R[f_1, \ldots, f_{n-1}]^{[1]}=B$. 

\section[Higher derivations of Jacobian type in positive characteristic]{Higher derivations of Jacobian type in positive characteristic}

First of all, we prepare some notation and results of general commutative ring theory. Let $A$ be a commutative ring and let $B$ be a commutative $A$-algebra via a homomorphism $\varphi : A \to B$. We say that $B$ is {\bf smooth} over $A$ if for any $A$-algebra $C$ with $g : A \to C$, an ideal $N \subset C$ with $N^2 = 0$ and a homomorphism of $A$-algebras $u : B \to C/N$, there exists a homomorphism of $A$-algebras $v : B \to C$ such that $v \circ \pi = u$, where $\pi : C\to C/N$ is the natural homomorphism. That is, $v$ commutes the following diagram:      
	\[
		\xymatrix
		{
		  B  \ar[r]^{\hspace{-2.8mm}u} \ar@{-->}[rd]^{\exists v} & C/N    \\
		  A \ar[u]^{\varphi} \ar[r]_{g} & C  \ar[u]_{\pi} . 
		}
	\]
For $\mathfrak{p} \in \Spec A$, we denote the residue field by $\kappa(\mathfrak{p}) = A_{\mathfrak{p}}/\mathfrak{p}A_{\mathfrak{p}}$. 

\begin{prop} \label{prop:2.1}
Let $\varphi : A \to B$ be a homomorphism of commutative rings. For $\mathfrak{p} \in \Spec A$, let $\iota_{\mathfrak{p}} : \kappa(\mathfrak{p}) \to B \otimes_A\kappa(\mathfrak{p})$ be the natural homomorphism of $A$-algebras. If $\varphi$ is smooth, then $\iota_{\mathfrak{p}}$ is also smooth for any $\mathfrak{p} \in \Spec A$. 
\end{prop}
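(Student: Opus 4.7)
The plan is to verify the lifting property defining smoothness for $\iota_{\mathfrak{p}}$ by reducing any lifting problem for $\iota_{\mathfrak{p}}$ to the corresponding lifting problem for $\varphi$. Thus, suppose we are given a $\kappa(\mathfrak{p})$-algebra $C$ with structure map $g : \kappa(\mathfrak{p}) \to C$, an ideal $N \subset C$ with $N^2 = 0$, and a $\kappa(\mathfrak{p})$-algebra homomorphism $u : B \otimes_A \kappa(\mathfrak{p}) \to C/N$; the task is to construct a $\kappa(\mathfrak{p})$-algebra lift $v : B \otimes_A \kappa(\mathfrak{p}) \to C$ with $\pi \circ v = u$.

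First, I view $C$ and $C/N$ as $A$-algebras via the composition $A \to \kappa(\mathfrak{p}) \to C$ (resp.\ $C/N$). Writing $j : B \to B \otimes_A \kappa(\mathfrak{p})$ for the canonical map, set $\tilde{u} := u \circ j : B \to C/N$, which is then an $A$-algebra homomorphism. Applying the smoothness of $\varphi$ to this data furnishes an $A$-algebra lift $\tilde{v} : B \to C$ with $\pi \circ \tilde{v} = \tilde{u}$.

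Next, I factor $\tilde{v}$ through the tensor product. Since the $A$-algebra structure on $C$ factors through $\kappa(\mathfrak{p}) = A_{\mathfrak{p}}/\mathfrak{p} A_{\mathfrak{p}}$, every element of $A \setminus \mathfrak{p}$ maps to a unit of $C$, while every element of $\mathfrak{p}$ maps to $0$. Hence, by the universal property of the tensor product (equivalently, the successive universal properties of localization and quotient), $\tilde{v}$ descends uniquely to an $A$-algebra homomorphism $v : B \otimes_A \kappa(\mathfrak{p}) \to C$ with $v \circ j = \tilde{v}$, and such $v$ is automatically a $\kappa(\mathfrak{p})$-algebra homomorphism.

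Finally, to check $\pi \circ v = u$, I observe that both sides are $\kappa(\mathfrak{p})$-algebra homomorphisms out of $B \otimes_A \kappa(\mathfrak{p})$, and they satisfy $\pi \circ v \circ j = \pi \circ \tilde{v} = \tilde{u} = u \circ j$. Since the image of $j$ generates $B \otimes_A \kappa(\mathfrak{p})$ as a $\kappa(\mathfrak{p})$-algebra, the desired equality follows. I do not foresee any substantive obstacle here, as this is the standard stability of formal smoothness under base change; the only point requiring care is the automatic factorization of $\tilde{v}$ through $B \otimes_A \kappa(\mathfrak{p})$, which is ensured because $C$ already carries a $\kappa(\mathfrak{p})$-algebra structure compatible with the $A$-action.
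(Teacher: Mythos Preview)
Your argument is correct: this is precisely the standard verification that formal smoothness is stable under base change, carried out here for the base change $A \to \kappa(\mathfrak{p})$. The paper itself simply writes ``Omitted'' for this proof, treating the statement as a well-known fact from commutative algebra, so there is no approach in the paper to compare yours against; your write-up fills in the details that the paper deliberately leaves out.
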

\begin{proof} 
Omitted. 
\end{proof}

\begin{example} \label{ex:2.2}
{\rm 
Let $R$ be an integral domain containing a prime field and let $R[x, y]  \cong_R R^{[2]}$. Then the following assertions hold true. 
	\begin{enumerate}
	  \item[{\rm (a)}]
	  The natural inclusion $R \to R[x, y]$ is smooth. 
	  \item[{\rm (b)}]
	  For a variable $f \in R[x, y]$, the natural inclusion $R[f] \to R[x, y]$ is smooth. 
	  \item[{\rm (c)}] 
	  For $xy \in R[x, y]$, the natural inclusion $R[xy] \to R[x, y]$ is not smooth. 
	\end{enumerate}
}
\end{example}
\begin{proof}
(a) and (b) are obvious. We prove the assertion (c). 

Assume to the contrary that $\iota : R[xy] \to R[x, y]$ is smooth. Let $k$ be an algebraic closure of $Q(R)$. By \emph{Proposition \ref{prop:2.1}}, the following homomorphism $\iota_0$ is smooth: 
	\[
	  \iota_0 : k \cong_k \kappa(0) \to k[x, y] \otimes_{k[xy]}\kappa(0) \cong_k k[x, y]/(xy). 
	\]
Set $C = k[t]/(t^3)$ and $N = t^2C$, where $k[t]  \cong_k k^{[1]}$. Then $N^2 = 0$. We define $u : k[x, y]/(xy) \to C/N$ by $u(x) = u(y) = t$. Since $\iota_0$ is smooth, there exists a homomorphism $v : k[x, y]/(xy) \to C$ of $k$-algebras such that $\pi \circ v = u$, namely, $v$ commutes the following diagram: 
	\[
		\xymatrix
		{
		  k[x, y]/(xy)  \ar[r]^{\hspace{5mm}u} \ar@{-->}[rd]^{\exists v} & C/N    \\
		  k \ar[u]^{\iota_0} \ar[r]_{g} & C  \ar[u]_{\pi} . 
		}
	\]
Then $v(x) = t + at^2$ and $v(y) = t + bt^2$ for some $a, b \in k$. However, 
	\[
	  0 = v(xy) = v(x)v(y) = t^2, 
	\]
which is a contradiction. 
\end{proof}

From now on, let $R$ be an integral domain containing a prime field of characteristic $p\geq 0$ and let $B=R[x_1, \ldots, x_n]\cong_RR^{[n]}$ be the polynomial ring in $n$ variables over $R$. We denote $\partial_{x_i}$ by the partial derivative with respect to $x_i$. 

\begin{defn} \label{def:2.3}
{\rm 
Let $d$ be an $R$-linear map on $B$. For $b\in B$, we denote $[d](b)$ by the result of calculation of $d(b)$ as if the characteristic of $R$ is zero, that is, we consider $p\neq 0$ in $B$. For $\ell \geq 1$, we define, for $b\in B$, 
	\[
	[d]^{\ell}(b)=[d]\left([d]^{\ell-1}(b)\right). 
	\]
}
\end{defn}

For $f_1, \ldots, f_{n-1} \in B$, let $F=(f_1, \ldots, f_{n-1})$ and $R[F]=R[f_1, \ldots, f_{n-1}]$. Then $F$ defines the $R$-derivation $\Delta_F$ and $\widetilde{\Delta}_F$ on $B$ by, for $g\in B$, 
	\[
	  \Delta_F(g) = {\rm det}\:\left({\partial x_j}(f_i)\right)_{1\leq i,j\leq n}, \hspace{5mm} 
	  \widetilde{\Delta}_F(g)=[{\rm det}]\:\left(\left[{\partial x_j}\right](f_i)\right)_{1\leq i,j\leq n}, 
	\]
where for a matrix $A$, $[{\rm det}]\:(A)$ means the result of calculation of ${\rm det}\:(A)$ as if the characteristic of $R$ is zero. $\widetilde{\Delta}_F$ is called the {\bf Jacobian derivation} determined by $F$. 

\begin{defn} \label{def:2.4}
{\rm
A higher $R$-derivation $D = \{ D_{\ell} \}_{\ell = 0}^{\infty}$ on $B$ is {\bf of Jacobian type} if there exists $F=(f_1, \ldots, f_{n-1}) \in B^{n-1}$ such that 
	\begin{enumerate}
	  \item[{\rm (a)}]
	  $R[F]\cong_RR^{[n-1]}$, 
	  \item[{\rm (b)}]
	  $f_1, \ldots, f_{n-1} \in B^D$, 
	  \item[{\rm (c)}]
	  $D_1=\widetilde{\Delta}_F$, 
	  \item[{\rm (d)}]
	  $\displaystyle D_{\ell} = \frac{1}{\ell !}[D_1]^{\ell}$ for\: 
	  	$
	  		\begin{cases}
			{\rm any}\: \: \ell \geq 0 & {\rm if}\: \: p=0,\\  
			0\leq \ell\leq p-1 & {\rm if}\: \: p>0. 
			\end{cases}
		$
	\end{enumerate}
}
\end{defn} 
\noindent 
Note that the above condition (b) is equivalent to $\varphi_D(f_i) = f_i$ for $1\leq i\leq n-1$, that is, $\varphi_D$ is a homomorphism over $R[F]$. 

\begin{prop} \label{prop:2.5} 
For $f_1, \ldots, f_{n-1} \in B \setminus R[{x_1}^p, \ldots, {x_n}^p]$, let $F=(f_1, \ldots, f_{n-1})$. Suppose that $R[F]\cong_RR^{[n-1]}$. If the natural inclusion $R[F] \to B$ is smooth, then there exists a higher $R$-derivation $D$ on $B$ of Jacobian type determined by $F$. 
\end{prop}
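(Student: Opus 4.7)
The plan is to construct the homomorphism $\varphi_D : B \to B[[t]]$ associated to the desired $D$ as an inverse limit of $R[F]$-algebra homomorphisms $\psi_m : B \to B[t]/(t^{m+1})$; the coefficients of $\varphi_D$ then define the $D_\ell$. The argument splits naturally: an explicit truncated-exponential construction for the first $p$ homogeneous pieces (or all pieces, if $p=0$), followed by iterative smoothness-based lifts in degrees $\geq p$.

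For the explicit part, $\widetilde{\Delta}_F$ is an $R$-derivation on $B$ with $\widetilde{\Delta}_F(f_i) = 0$ for each $i$, since the Jacobian determinant defining $\widetilde{\Delta}_F(f_i)$ has a repeated row. Iterating the Leibniz rule yields $[\widetilde{\Delta}_F]^\ell(bc) = \sum_{i+j=\ell}\binom{\ell}{i}[\widetilde{\Delta}_F]^i(b)[\widetilde{\Delta}_F]^j(c)$. Put $D_\ell := (1/\ell!)[\widetilde{\Delta}_F]^\ell$ for $0 \le \ell \le p-1$ (and for all $\ell \ge 0$ when $p=0$); this is well defined in $B$ because $\ell!$ is invertible there. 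Using $\binom{\ell}{i}/\ell! = 1/(i!(\ell-i)!)$, one then checks that $\psi_{p-1}(b) := \sum_{\ell=0}^{p-1} D_\ell(b)\,t^\ell$ is a ring homomorphism modulo $t^p$; since $\widetilde{\Delta}_F$ and all its positive iterates kill every $f_i$, $\psi_{p-1}$ is in fact an $R[F]$-algebra homomorphism. In characteristic $0$, the same formula (summing over all $\ell$) produces $\varphi_D : B \to B[[t]]$ directly, and smoothness is not required.

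Now assume $p > 0$, and suppose inductively that an $R[F]$-algebra homomorphism $\psi_m : B \to B[t]/(t^{m+1})$ has been constructed for some $m \ge p-1$. The projection $\pi : B[t]/(t^{m+2}) \to B[t]/(t^{m+1})$ has kernel $N = t^{m+1}B[t]/(t^{m+2})$, which satisfies $N^2 = 0$ because $2(m+1)\ge m+2$. Viewing the targets as $R[F]$-algebras via the constant inclusion, the smoothness hypothesis on $R[F] \to B$ supplies an $R[F]$-algebra lift $\psi_{m+1} : B \to B[t]/(t^{m+2})$ with $\pi \circ \psi_{m+1} = \psi_m$. Passing to the inverse limit produces $\varphi_D : B \to B[[t]]$, whose coefficient functions $D_\ell$ form the higher $R$-derivation $D$. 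Conditions (a), (c) and (d) of Definition \ref{def:2.4} are immediate (from hypothesis, construction, and the explicit formula in degrees $< p$, respectively), while (b) follows from $R[F]$-linearity of $\varphi_D$: $\varphi_D(f_i) = f_i$, so $D_\ell(f_i) = 0$ for all $\ell \ge 1$.

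The principal technical point is the combinatorial verification that the truncated exponential $\psi_{p-1}$ is multiplicative modulo $t^p$; everything else is formal. The smoothness step is a direct application of the defining lifting property, and no convergence issue arises since $B[[t]]$ is the inverse limit of the $B[t]/(t^{m+1})$.
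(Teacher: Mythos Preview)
Your proof is correct and follows essentially the same route as the paper: build the truncated exponential $\psi_{p-1}$ from $\widetilde{\Delta}_F$ to get an $R[F]$-algebra map modulo $t^{p}$, then use the smoothness lifting property against the square-zero extensions $B[t]/(t^{m+2}) \twoheadrightarrow B[t]/(t^{m+1})$ to extend degree by degree, and pass to the inverse limit. Your argument is slightly more explicit than the paper's in justifying multiplicativity of $\psi_{p-1}$ via the iterated Leibniz identity and in noting why $\widetilde{\Delta}_F(f_i)=0$ (repeated row), but the architecture is identical.
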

\begin{proof}
Define $D_0 = {\rm id}_B$ and $D_1 = \widetilde{\Delta}_F \:(\neq 0)$. Let $B[t] \cong_B B^{[1]}$. Here, we define a map $\varphi_{\ell} : B \to B[t]/(t^{\ell + 1})$ by, for $g \in B$ and $1 \leq \ell \leq p - 1$,  
	\[
	  \varphi_{\ell}(g) = \sum_{i = 0}^{\ell}\frac{1}{i !}{[\widetilde{\Delta}_F}]^i(g)t^i. 
	\]
Then $\varphi_{\ell}$ is a homomorphism of $R[F]$-algebras such that $\varphi_{\ell}(g) |_{t = 0} = g$ for any $g \in B$. 

For $r \geq 0$, let $C_r = B[t]/(t^{p + r})$ and $N_r = t^{p + r - 1}C_r$. Then 
	\[
	N_r^2 = 0, \: C_r/N_r \cong_B B[t]/(t^{p + r  - 1}). 
	\]
Since $R[F] \to B$ is smooth, there exists a homomorphism $\varphi_{p + r} : B \to C_r$ of $R[F]$-algebras such that $\pi_r \circ \varphi_{p + r} = \varphi_{p + r - 1}$, that is, we have the following diagram: 
	\[
		\xymatrix
		{
		  B  \ar[r]^{\hspace{-10mm} \varphi_{p + r - 1}} \ar@{-->}[rd]^{\hspace{1mm} \exists \varphi_{p + r}} & 
		  B[t]/(t^{p + r  - 1})  \ar[r]^{\hspace{5mm} \cong_B}&
		   C_r/N_r  \\
		  R[F] \ar[u]^{} \ar[r]_{} & 
		  B[t]/(t^{p + r}) \ar[r]^{\hspace{5mm} =} &
		  C_r \ar[u]_{\pi_r} . 
		}
	\]
Moreover $\varphi_{p + r}(g) |_{t = 0} = g$ for any $g \in B$. For $0 \leq i \leq r - 1$, by using $\varphi_{p + r}$, we define a homomorphism of $R$-modules $D_{p + i} : B \to B$ by the following formula:  
	\[ 
	  \varphi_{p + r}(g) = \sum_{\ell = 0}^{p - 1}\frac{1}{\ell !}{[\widetilde{\Delta}_F]}^{\ell}(g)t^{\ell} + \sum_{i = 0}^{r - 1}D_{p + i}(g)t^{p + i}
	\]
for $g \in B$. By constructing such homomorphisms inductively, we have a homomorphism of $R[F]$-algebras $\varphi = \varphi_{\infty} : B \to B[[t]] \cong_BB^{[[1]]}$ such that, for $g \in B$, $\varphi(g) |_{t = 0} = g$ and   
	\[
	  \varphi(g) = \sum_{\ell = 0}^{p - 1}\frac{1}{\ell !}{[\widetilde{\Delta}_F]}^{\ell}(g)t^{\ell} + \sum_{i = 0}^{\infty}D_{p + i}(g)t^{p + i}. 
	\]
Set $D_{\ell} = {\ell !}^{-1}{[\widetilde{\Delta}_F]}^{\ell}$ for $0 \leq \ell \leq p - 1$ and $D = \{ D_{\ell} \}_{\ell = 0}^{\infty}$. By the construction of each $D_{\ell}$, we see that $D$ is a higher $R$-derivation on $B$ of Jacobian type determined by $F$. 
\end{proof} 

We note that smoothness is not necessarily for the existence of higher $R$-derivations of Jacobian type. We give an example below. 

\begin{example} \label{ex:2.6}
{\rm
Let $f=xy\in \F_2[x,y]\cong_{\F_2}\F_2^{[2]}$. We define $\varphi : \F_2[x,y]\to\F_2[x,y][[t]]$ by 
	\[
	  \varphi(x)=\sum_{\ell=0}^{\infty}xt^{\ell}, \: \varphi(y)=y+yt. 
	\]
Then there exists a higher $R$-derivation $D=\{ D_{\ell}\}_{\ell=0}^{\infty}$ on $\F_2[x,y]$ such that $\varphi_D=\varphi$ and $D_1=\widetilde{\Delta}_f$. Moreover, $xy\in\F_2[x,y]^D$. Therefore $D$ is a higher $R$-derivation of Jacobian type determined by $f$, but the natural inclusion $\F_2[xy]\to\F_2[x,y]$ is not smooth. 
}
\end{example}

In order to explain the statement of the main theorem (\emph{Theorem \ref{thm:2.8}}), we introduce some definitions as below. For a positive integer $\ell \geq 1$, we write $\ell ! = p^{e(\ell)}m_{\ell}$, where $p$ does not divide $m_{\ell}$. Let $\ell \geq 1$ and $g \in B$. For a non-zero $R$-derivation $d \in \Der_RB$, we say that ${\ell !}^{- 1}[d]^{\ell}$ is {\bf defined} at $g$ if, for any $1\leq i \leq \ell$, there exists $g_{i} \in B$ such that 
	\[
	  [d]^{i}(g)=p^{e(i)}g_{i}. 
	\]
We define its value by $({\ell !}^{- 1}[d]^{\ell})(g) = m_{\ell}^{-1}g_{\ell}$. When ${\ell !}^{- 1}[d]^{\ell}$ is defined at any $g\in B$ and $\ell \geq 1$, we consider the map $\Exp(td) : B \to B[[t]] \cong_BB^{[[1]]}$ defined by  
	\[
	  \Exp (td) (g) = \sum_{\ell = 0}^{\infty} \frac{[d]^{\ell}}{\ell !}(g)t^{\ell} = \sum_{\ell = 0}^{\infty} \frac{1}{m_{\ell}}g_{\ell}t^{\ell}.  
	\]
By the definition of $\Exp (td)$, we see that it is a homomorphism of $R$-algebras and satisfies that $\Exp (td)(g) |_{t = 0} = g$ for $g \in B$. Therefore, there exists a higher $R$-derivation $D$ on $B$ such that $\Exp (td)=\varphi_D$. 

In order to check whether the map $\Exp(td)$ is defined or not, it is enough to show that ${\ell !}^{- 1}[d]^{p^{\ell}}$ is defined at $x_1, \ldots, x_n$ for any $\ell \geq 1$.  

\begin{example} \label{ex:2.7}
{\rm 
Let $B = \F_3[x, y] \cong_{\F_3} \F_3^{[2]}$. Set $d_1 = \widetilde{\Delta}_{x-y^3}= 3y^2\partial_x + \partial_y$ and $d_2 = \widetilde{\Delta}_{xy}=-y\partial_x + x\partial_y$. Then $\Exp(td_1)$ is defined, but $\Exp(td_2)$ is not defined. Indeed, for $d_1$, it is clear that ${\ell !}^{- 1}{[d_1]}^{\ell}$ is defined at $y$ for $\ell \geq 1$. Also, it is defined at $x$ for $\ell \geq 1$ as the following table. 
\begin{table}[htb]
	\begin{tabular}{c||c|c|c}
		$\ell$& $\ell ! = p^{e(\ell)}m_{\ell}$ & ${[d_1]}^{\ell}(x)$ &$({\ell !}^{- 1}{[d_1]}^{\ell})(x)$ \\ \hline \hline
		$1$ & $3^0\cdot 1$& $3y^2$ & $0$ \\ \hline 
		$2$ & $3^0\cdot 2$ & $3\cdot 2y$ & $0$ \\ \hline 
		$3$ & $3^1\cdot 2!$ & $3!$ & $1$ \\ \hline 
		$\ell \geq 4$ & $\ell !$ & $0$ & $0$ 
	\end{tabular}
\end{table}

\noindent 
Therefore $\Exp(td_1)$ can be defined and $\Exp(td_1)(x)=x+t^3$, $\Exp(td_1)(y)=y+t$. Moreover, it is easy to show that $\Exp(td_1)(x-y^3)=x-y^3$. 

On the other hand, for $d_2$, 
	\[
	  {[d_2]}^{\ell}(x) = 
	  	\begin{cases}
		(-1)^{\frac{\ell+1}{2}}y & {\rm (\ell \: \: is\: odd)},\\
		(-1)^{\frac{\ell}{2}}x & {\rm (\ell \: \: is\: even)}, 
		\end{cases}
	\]
hence ${\ell !}^{- 1}{[d_2]}^{\ell}$ is not defined at $x$ when $\ell \geq 2$. 
}
\end{example}

Let $d \in \Der_RB$ be a non-zero $R$-derivation. We consider $\Aut_RB$ as a subgroup of $\Aut_RB[[t]]$ by $\sigma(t) = t$ for $\sigma \in \Aut_RB$. If $\Exp(td)$ can be defined, then $\Exp (t \cdot {}^{\sigma}\hspace{-.2em}d)$ can be defined for any $\sigma \in \Aut_RB$, where ${}^{\sigma}\hspace{-.2em}d := \sigma^{-1} \circ d \circ \sigma$. In particular, the following holds: 
	\[
	  \Exp (t \cdot {}^{\sigma}\hspace{-.2em}d) = \sigma^{-1} \circ  \Exp (td) \circ \sigma.
	\] 

The following is the main result in this paper which is a generalization of \cite[Proposition 2.3]{Ess95} in positive characteristic.  

\begin{thm} \label{thm:2.8} 
Let $R$ be an integral domain of characteristic $p > 0$ and let $B=R[x_1, \ldots, x_n]\cong_RR^{[n]}$ be the polynomial ring in $n$ variables over $R$. For $f_1, \ldots, f_{n-1} \in B$, let $F=(f_1, \ldots, f_{n-1})$. Then the following conditions are equivalent{\rm :}
	\begin{enumerate}
	  \item[{\rm (\ri)}]
	  $F$ is extendable. 
	  \item[{\rm (\rii)}]
	  $D = \{ {\ell !}^{- 1}{[\widetilde{\Delta}_F]}^{\ell} \}_{\ell = 0}^{\infty}$ can be defined, is an lfihd  on $B$ of Jacobian type determined by $F$ such that $B^D = R[F]$ and has a slice. 
	\end{enumerate}
\end{thm}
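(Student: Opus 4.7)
My plan is to prove the two implications separately. The direction $(\rii) \Rightarrow (\ri)$ is immediate from \emph{Proposition \ref{prop:1.1}}: if $D$ admits a slice $s \in B$, then $B = B^D[s] = R[F][s]$ with $s$ indeterminate over $R[F]$, which is precisely the extendability of $F$.

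For the harder direction $(\ri) \Rightarrow (\rii)$, I would begin by choosing $g \in B$ so that $B = R[F][g] \cong_{R[F]} R[F]^{[1]}$. The crux of the argument is the observation that the Jacobian determinant $J := \widetilde{\Delta}_F(g)$ is a unit in $R$. Indeed, since $(f_1, \ldots, f_{n-1}, g)$ is a system of coordinates of $B$ over $R$, the chain rule applied to the inverse change of variables shows that the Jacobian matrix of $(f_1, \ldots, f_{n-1}, g)$ with respect to $(x_1, \ldots, x_n)$ is invertible over $B$, so $J \in B^{\times}$. Since $B \cong_R R^{[n]}$ and $R$ is an integral domain, $B^{\times} = R^{\times}$, whence $J \in R^{\times}$ is a constant. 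In particular, no $f_i$ lies in $R[x_1^p,\ldots,x_n^p]$, so $\widetilde{\Delta}_F \neq 0$.

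Expressed in the coordinates $(f_1, \ldots, f_{n-1}, g)$, the derivation $\widetilde{\Delta}_F$ then coincides with $J \partial_g$ (cofactor expansion along the last row). I would construct $D$ explicitly as follows. Let $E = \{E_\ell\}_{\ell = 0}^{\infty}$ be the canonical lfihd on $R[F][g]$ defined by $E_\ell(f_i) = 0$ and $E_\ell(g^m) = \binom{m}{\ell} g^{m - \ell}$, extended by the Leibniz rule, and set $D_\ell := J^\ell E_\ell$. The Leibniz rule for $D$ (from $J^{i+j} = J^i J^j$), iterativity ($D_i \circ D_j = J^{i+j} E_i E_j = J^{i+j} \binom{i+j}{j} E_{i+j} = \binom{i+j}{j} D_{i+j}$), and local finiteness all follow directly from the corresponding properties of $E$. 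One checks $B^D = B^E = R[F]$, $D_1 = J \partial_g = \widetilde{\Delta}_F$, and $\varphi_D(g) = g + Jt$ has unit leading coefficient $J \in R^{\times}$, so $g$ is a slice.

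The final step is to verify the identity $D_\ell = (\ell!)^{-1} [\widetilde{\Delta}_F]^\ell$ in the sense of the paper's divided-power convention. Since $J \in R^{\times}$ is a constant and commutes with $\partial_g$, we have $[\widetilde{\Delta}_F]^\ell = J^\ell \partial_g^\ell$; for a monomial $g^m \in R[F][g]$ this gives $[\widetilde{\Delta}_F]^\ell(g^m) = J^\ell \ell! \binom{m}{\ell} g^{m-\ell}$, which exhibits the required divisibility by $p^{e(\ell)}$ and yields the value $J^\ell \binom{m}{\ell} g^{m-\ell} = D_\ell(g^m)$. The Leibniz rule and $R[F]$-linearity extend this to all of $B$, and in particular show that $D = \{(\ell!)^{-1}[\widetilde{\Delta}_F]^\ell\}_{\ell = 0}^{\infty}$ can be defined. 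The principal obstacle is the verification that $J \in R^{\times}$; once this is established, the collapse of $\widetilde{\Delta}_F$ to a scalar multiple of a standard partial derivative reduces everything else to routine computation with the canonical higher derivation $E$.
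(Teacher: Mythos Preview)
Your proof is correct and follows essentially the same strategy as the paper's: both reduce $\widetilde{\Delta}_F$ to the standard partial derivative via the coordinate change furnished by extendability. The paper packages this as conjugation by the automorphism $\sigma$ sending $(x_1,\ldots,x_n)$ to $(f_1,\ldots,f_{n-1},s)$, normalizes so that $\widetilde{\Delta}_F(s)=1$, and then invokes the compatibility $\Exp(t\cdot{}^{\sigma}d)=\sigma^{-1}\circ\Exp(td)\circ\sigma$ stated just before the theorem; you instead keep the unit $J$ and build $D_\ell=J^\ell E_\ell$ by hand from the canonical divided-power derivation $E$ on $R[F][g]$, which amounts to the same reduction written out explicitly.
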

\begin{proof}
{\bf (\ri) $\Longrightarrow$ (\rii)}\: 
Since $F$ is extendable, there exists $s \in B$ such that $R[f_1, \ldots, f_{n-1}, s] = B$. Define the $R$-automorphism $\sigma : B \to B$ by $\sigma(x_i) = f_i$ for $1\leq i \leq n-1$ and $\sigma(x_n) = s$. We may assume that $\widetilde{\Delta}_F(s) = 1$ and ${}^{\sigma}\hspace{-.2em}\widetilde{\Delta}_F = \partial_{x_n}$. It is clear that $\Exp(t\partial_{x_n})$ can be defined, hence $\Exp (t \cdot {}^{\sigma}\hspace{-.2em}\widetilde{\Delta}_F) = \sigma^{-1} \circ  \Exp (t\partial_{x_n}) \circ \sigma$. This implies that ${\ell !}^{- 1}{[\widetilde{\Delta}_F]}^{\ell}$ is defined at any $g \in B$ and $\ell \geq 1$. Set $D = \{ {\ell !}^{- 1}{[\widetilde{\Delta}_F]}^{\ell} \}_{\ell = 0}^{\infty}$. Then $D$ is an lfihd on Jacobian type determined by $F$. It is clear that $B^D = R[F]$ and $s$ is a slice of $D$. 

{\bf (\rii) $\Longrightarrow$ (\ri)}\: 
Let $s\in B$ be a slice of $D$. By \emph{Proposition \ref{prop:1.1}}, $B=R[F][s]$, which implies that $F$ is extendable. 
\end{proof}

\section{Higher derivations of Jacobian type on $k[x,y]$}

Let $k$ be a field of characteristic $p > 0$. Through this section, we suppose that $k[x,y]\cong_kk^{[2]}$ is the polynomial ring in two variables over $k$. 

By using \emph{Theorem \ref{thm:2.8}}, we have the following result. This is a generalization of \cite[Theorem 3.2]{ER04} in the case where the characteristic of the ground field is positive.  

\begin{thm} \label{thm:3.1} 
Let $f \in k[x,y]$. Then the following conditions are equivalent{\rm :}
	\begin{enumerate}
	  \item[{\rm (\ri)}]
	  $f$ is a variable. 
	  \item[{\rm (\rii)}]
	  $D = \{ {\ell !}^{- 1}{[\widetilde{\Delta}_f]}^{\ell} \}_{\ell = 0}^{\infty}$ can be defined, is an lfihd  on $k[x,y]$ of Jacobian type determined by $f$ such that $B^D = k[f]$ and has a slice. 
	  \item[{\rm (\riii)}]
	  $D = \{ {\ell !}^{- 1}{[\widetilde{\Delta}_f]}^{\ell} \}_{\ell = 0}^{\infty}$ can be defined and is an lfihd  on $k[x,y]$ of Jacobian type determined by $f$ such that $B^D = k[f]$. 
	\end{enumerate}
\end{thm}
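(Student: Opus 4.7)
The equivalence of (i) and (ii) is obtained at once by specializing Theorem \ref{thm:2.8} to $n=2$ and $F=(f)$: in this case extendability of $F$ is, by definition, the statement that $f$ is a variable. The implication (ii) $\Rightarrow$ (iii) is trivial, since (iii) is just (ii) with the slice condition removed. Hence the content of the theorem is the implication (iii) $\Rightarrow$ (ii): under the hypotheses of (iii) the lfihd $D$ automatically has a slice, after which Theorem \ref{thm:2.8} closes the loop.

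Since $B^D=k[f]\subsetneq B$, the lfihd $D$ is nontrivial and therefore admits a local slice $s\in B$. Set $d=\deg_t\varphi_D(s)\geq 1$. The first step is to exploit iterativity to pin down the leading coefficient $D_d(s)$: for every $\ell\geq 1$,
\[
D_\ell(D_d(s))=\binom{\ell+d}{d}D_{\ell+d}(s)=0,
\]
because $D_{\ell+d}(s)=0$ by minimality of $d$. Hence $D_d(s)\in B^D=k[f]$, and we may write $D_d(s)=h(f)$ for some $h\in k[T]$.

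The core step is to upgrade $s$ to an honest slice by proving $h\in k^{\times}$. My plan is to localize at $h(f)\in B^D$: the lfihd $D$ extends to an lfihd on $B_{h(f)}$ with ring of constants $k[f,1/h(f)]$, and in this localization $s$ becomes a slice. Proposition \ref{prop:1.1} then yields $B_{h(f)}=k[f,1/h(f)][s]$; in particular $s$ is transcendental over $k(f)$, the subring $k[f,s]\subseteq B$ is a copy of $k^{[2]}$, and the inclusion $k[f,s]\hookrightarrow B$ becomes an equality after inverting $h(f)$. From here one wants to bootstrap to $k[f,s]=B$, equivalently $h\in k^{\times}$.

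The main obstacle is exactly this last step, because in positive characteristic one loses both the Jacobian-unit criterion and the Abhyankar--Moh--Suzuki theorem that would pass for free from a birational equality to an honest one. My plan is to exploit the canonical form $D=\{(\ell!)^{-1}[\widetilde{\Delta}_f]^{\ell}\}_{\ell\geq 0}$: if $h$ had a root $\alpha\in\bar k$, then after base change to $\bar k$ and reduction modulo $f-\alpha$, the induced Jacobian-type structure on the one-dimensional ring $\bar k[x,y]/(f-\alpha)$, together with the collapse of the invariant ring to $\bar k$ and the vanishing of the image of $h(f)$ in that fiber, would contradict the minimality of $d$ for the reduced local slice. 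This forces $h\in k^{\times}$, so $s$ is a slice; Theorem \ref{thm:2.8} then gives (iii) $\Rightarrow$ (ii), which combined with the already-established equivalence (i) $\Leftrightarrow$ (ii) yields (iii) $\Rightarrow$ (i) and completes the cycle.
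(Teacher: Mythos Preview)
Your reductions (i)$\Leftrightarrow$(ii) via Theorem~\ref{thm:2.8} and (ii)$\Rightarrow$(iii) are fine, and your computation $D_\ell(D_d(s))=\binom{\ell+d}{d}D_{\ell+d}(s)=0$ showing $D_d(s)=h(f)\in k[f]$ is correct. The problem is the final step: you never actually prove $h\in k^\times$, you only sketch a ``plan'' whose key contradiction does not materialize.

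Concretely, suppose $h(\alpha)=0$ for some $\alpha\in\bar k$. Since $f-\alpha\in B^D$, the lfihd $D$ does descend to $\bar B=\bar k[x,y]/(f-\alpha)$, and the image $\bar s$ satisfies $\bar D_d(\bar s)=0$. But this does \emph{not} contradict the minimality of $d$: that minimality was taken among elements of $B\setminus B^D$, not among elements of $\bar B\setminus\bar B^{\bar D}$, and there is no reason the minimal local-slice degree should be preserved under passage to a fiber. You would also need to know that $\bar s\notin\bar B^{\bar D}$ and to identify $\bar B^{\bar D}$, neither of which you establish. In short, the ``would contradict the minimality of $d$ for the reduced local slice'' sentence is the entire content of the implication, and it is not justified.

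The paper avoids all of this by taking a completely different route for (iii)$\Rightarrow$(i): once you know $B^D=k[f]$ for a nontrivial lfihd $D$ on $k[x,y]$, Kojima's theorem \cite[Theorem~1]{Koj14} directly says that $f$ is a variable. No slice argument is needed; the work is outsourced to that structural result about kernels of lfihd's on $k[x,y]$ in positive characteristic. What you are attempting --- showing that every local slice of such a $D$ is already a slice --- would amount to reproving (a version of) Kojima's theorem from scratch, and your sketch does not supply the missing input.
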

\begin{proof}
{\bf (\ri) $\Longrightarrow$ (\rii)}\: 
This implication follows from \emph{Theorem \ref{thm:2.8}}. 

{\bf (\rii) $\Longrightarrow$ (\riii)}\: 
Obvious. 

{\bf (\riii) $\Longrightarrow$ (\ri)}\: 
Since the ring $k[f]$ is the kernel of the lfihd $D$, it follows from \cite[Theorem 1]{Koj14} that $f$ is a variable. 
\end{proof}

By using \emph{Theorem \ref{thm:3.1}}, we have the following. This is a generalization of \cite[Corollary 4.6]{Fre17} in the case where the characteristic of the ground field is positive.  

\begin{cor} \label{cor:3.2}
For $f \in k[x,y] \setminus k[x^p, y^p]$, the following two conditions are equivalent{\rm :}
	\begin{enumerate}
	  \item[(\ri)]
	  $f$ is univariate. 
	  \item[(\rii)]
	  $D = \{ {\ell !}^{- 1}{[\widetilde{\Delta}_f]}^{\ell} \}_{\ell = 0}^{\infty}$ can be defined and is an lfihd  on $k[x,y]$ of Jacobian type determined by $f$. 
	\end{enumerate} 
\end{cor}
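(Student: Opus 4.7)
The plan is to deduce this corollary from \emph{Theorem \ref{thm:3.1}} by relating the higher derivation associated to a univariate $f = h(g)$ to the one associated to the underlying variable $g$.

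For the implication {\bf (\rii) $\Longrightarrow$ (\ri)}, I would first observe that the hypothesis $f \notin k[x^p, y^p]$ forces $\widetilde{\Delta}_f \neq 0$, so $D$ is a non-trivial lfihd on $k[x,y]$. By \cite[Theorem 1]{Koj14}, applied in the same way as in the proof of \emph{Theorem \ref{thm:3.1}}, the kernel $k[x,y]^D$ must be of the form $k[g]$ for some variable $g \in k[x,y]$. Since $D$ is of Jacobian type determined by $f$, condition (b) of \emph{Definition \ref{def:2.4}} gives $f \in k[x,y]^D = k[g]$, i.e. $f$ is univariate.

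For {\bf (\ri) $\Longrightarrow$ (\rii)}, write $f = h(g)$ with $g$ a variable and $h \in k[T] \cong k^{[1]}$. \emph{Theorem \ref{thm:3.1}} applied to $g$ yields a well-defined lfihd $D^{(g)} = \{\ell!^{-1}[\widetilde{\Delta}_g]^{\ell}\}_{\ell \geq 0}$ on $k[x,y]$ of Jacobian type determined by $g$, with kernel $k[g]$. The chain rule gives $\widetilde{\Delta}_f = h'(g)\,\widetilde{\Delta}_g$, where $h'(g) \in k[g]$ is nonzero since $f \notin k[x^p, y^p]$ forces $h \notin k[T^p]$. Because $h'(g) \in k[g] \subset k[x,y]^{D^{(g)}}$ is annihilated by $\widetilde{\Delta}_g$, an induction on $\ell$ combining this fact with the Leibniz rule yields
\[
  [\widetilde{\Delta}_f]^{\ell}(b) \;=\; h'(g)^{\ell}\,[\widetilde{\Delta}_g]^{\ell}(b) \qquad \text{for all } b \in k[x,y],\ \ell \geq 0.
\]
Consequently, each $\ell!^{-1}[\widetilde{\Delta}_f]^{\ell}(b) = h'(g)^{\ell} \cdot D^{(g)}_{\ell}(b)$ already lies in $k[x,y]$, so $D$ can be defined and satisfies $D_{\ell} = h'(g)^{\ell}\,D^{(g)}_{\ell}$ for every $\ell \geq 0$. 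All the lfihd axioms then transfer: the higher-derivation identity expands via $h'(g)^{i+j} = h'(g)^{i}\cdot h'(g)^{j}$, local finiteness is inherited because $D_{\ell}(b) = 0$ whenever $D^{(g)}_{\ell}(b) = 0$, and the iterative property $D_i \circ D_j = \binom{i+j}{j} D_{i+j}$ follows from $D^{(g)}_i(h'(g)^j c) = h'(g)^j D^{(g)}_i(c)$ (as $h'(g)^j \in k[x,y]^{D^{(g)}}$) together with the iterative property of $D^{(g)}$. The four conditions of \emph{Definition \ref{def:2.4}} characterising Jacobian type determined by $f$ are then routine: (a) holds because $f$ is transcendental over $k$; (b) follows from $\widetilde{\Delta}_f(f) = 0$; and (c), (d) are built into the definition of $D$.

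The main obstacle is verifying the scalar relation $[\widetilde{\Delta}_f]^{\ell} = h'(g)^{\ell}\,[\widetilde{\Delta}_g]^{\ell}$ rigorously at the level of the bracketed, formally lifted computations, and checking that the $p^{e(\ell)}$-divisibility of $[\widetilde{\Delta}_g]^{\ell}(b)$ is transferred verbatim to $[\widetilde{\Delta}_f]^{\ell}(b)$. Once this identity is in hand, every other clause of {\bf (\ri) $\Longrightarrow$ (\rii)} reduces mechanically to the corresponding statement for the variable $g$ supplied by \emph{Theorem \ref{thm:3.1}}.
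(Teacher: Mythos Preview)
Your proposal is correct and follows essentially the same route as the paper: for (\ri)$\Rightarrow$(\rii) both you and the paper write $f=h(g)$, use the chain rule $\widetilde{\Delta}_f=h'(g)\widetilde{\Delta}_g$ together with $h'(g)\in k[g]=k[x,y]^{D^{(g)}}$ to get $[\widetilde{\Delta}_f]^{\ell}=h'(g)^{\ell}[\widetilde{\Delta}_g]^{\ell}$, and then transfer the lfihd properties from the higher derivation attached to $g$ via \emph{Theorem~\ref{thm:3.1}}; for (\rii)$\Rightarrow$(\ri) both invoke \cite[Theorem~1]{Koj14} to identify $k[x,y]^D=k[g]$ and use condition~(b) of \emph{Definition~\ref{def:2.4}} to place $f$ in $k[g]$. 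One small imprecision: your justification ``(b) follows from $\widetilde{\Delta}_f(f)=0$'' only gives $D_1(f)=0$, whereas (b) requires $f\in B^D$; the correct reason, already implicit in your argument, is that $f=h(g)\in k[g]=k[x,y]^{D^{(g)}}$ and hence $D_\ell(f)=h'(g)^\ell D^{(g)}_\ell(f)=0$ for all $\ell\ge 1$.
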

\begin{proof}
{\bf (\ri) $\Longrightarrow$ (\rii)}\: 
Since $f$ is univariate, there exists a variable $g \in k[x,y]$ such that $f \in k[g]$. Then $f=u(g)$ for some $u(t)\in k[t]\cong_kk^{[1]}$. Hence $\widetilde{\Delta}_f=u'(g)\widetilde{\Delta}_g$, where $u'(t)$ is the derivative of $u(t)$ with respect to $t$. 
By \emph{Theorem \ref{thm:3.1}}, $\delta=\{\ell^{-1}[\widetilde{\Delta}_g]^{\ell}\}_{\ell=0}^{\infty}$ is an lfihd on $k[x,y]$ of Jacobian type determined by $g$ with $k[x,y]^{\delta} = k[g]$. Since $u'(g) \in k[x,y]^{\widetilde{\Delta}_g}$, we have $[\widetilde{\Delta}_f]^{\ell}=u'(t)^{\ell}[\widetilde{\Delta}_g]^{\ell}$ for any $\ell \geq 1$. Therefore, ${\ell !}^{-1}[\widetilde{\Delta}_f]=u'(t)^{\ell}{\ell !}^{-1}[\widetilde{\Delta}_g]^{\ell}$ is defined. Here, $D=\{{\ell !}^{-1}[\widetilde{\Delta}_f]^{\ell}\}$ is a well-defined iterative higher $R$-derivation of Jacobian type determined by $f$. Furthermore, since $\delta$ is locally finite, so is $D$. 

{\bf (\rii) $\Longrightarrow$ (\ri)}\: 
Since $D$ is locally finite and iterative, it follows from \cite[Theorem 1]{Koj14} that $k[x,y]^D=k[g]$ for some variable $g\in k[x,y]$. Since $D$ is of Jacobian type, we have $f\in k[x,y]^D=k[g]$. This implies that $f$ is univariate. 
\end{proof}

\begin{remark}
{\rm
In Theorems \ref{thm:2.8} and \ref{thm:3.1}, we need to consider $\widetilde{\Delta}_F$, not ${\Delta}_F$. 

For example, we consider $f=x-y^3\in\F_3[x,y]\cong_{\F_3}\F_3^{[2]}$ (see also Example \ref{ex:2.7}). Then $\widetilde{\Delta}_f=3y^2\partial_x+\partial_y$ and $\Delta_f=\partial_y$. Clearly, both of $D_1:=\{ {\ell !}^{- 1}{[\widetilde{\Delta}_f]}^{\ell} \}_{\ell = 0}^{\infty}$ and $D_2:=\{ {\ell !}^{- 1}{[{\Delta}_f]}^{\ell} \}_{\ell = 0}^{\infty}$ are well-defined, locally finite and iterative. However, since $\varphi_{D_2}$ is defined by 
	\[
	\varphi_{D_2}(x)=x, \:\: \varphi_{D_2}(y)=y+t, 
	\]
we have $\varphi_{D_2}(x-y^3)=x-y^3-t^3$. Therefore $f\not\in\F_3[x,y]^{D_2}$, which implied that $D_2$ is not of Jacobian type determined by $f$. 
}
\end{remark}


\end{document}